\newtheorem{thm}{Theorem}[section]
\newtheorem{pro}[thm]{Proposition}
\newtheorem{lem}[thm]{Lemma}
\newtheorem{cor}[thm]{Corollary}
\def\con{\subseteq}
\def\from{\colon}
\def\CL#1{\overline{#1}}
\def\cl{\mathrm{cl}}
\def\leukfrac#1/#2{\leavevmode
               \kern.1em
                \raise.9ex\hbox{\the\scriptfont0 ${}_#1$}
                \hskip -1pt\kern-.1em
                /\kern-.15em\lower.10ex\hbox{\the\scriptfont0 ${}_#2$}}
\theoremstyle{definition}
\newtheorem{question}[thm]{Question}
\newtheorem{example}[thm]{Example}
\theoremstyle{remark}
\newtheorem{claim}{Claim}
\def\Int{\mathop{\operator@font Int}\nolimits}
\begin{document}

\title[Actions of semitopological groups]
{Actions of semitopological groups}

\author{Jan van Mill}
\address{KdV Institute for Mathematics\\
University of Amsterdam\\
Science Park 105-107\\
P.O. Box 94248\\
1090 GE Amsterdam, The Netherlands}
\email{j.vanMill@uva.nl}
\thanks{The first-listed author is pleased to thank the Department of Mathematics at Nipissing University for generous hospitality and support.}

\author{Vesko Valov}
\address{Department of Computer Science and Mathematics, Nipissing University,
100 College Drive, P.O. Box 5002, North Bay, ON, P1B 8L7, Canada}
\email{veskov@nipissingu.ca}
\thanks{The second-listed author was partially supported by NSERC Grant 261914-13.}

\keywords{Dugundji spaces, group actions, right-uniformly continuous functions, semitopological groups, topological groups}

\subjclass[2010]{Primary 54H11, 54H15; Secondary 22A10}



\begin{abstract}
We investigate continuous transitive actions of semitopological groups on spaces, as well as separately continuous transitive actions of topological groups.

\end{abstract}

\maketitle
\section{Introduction}

All spaces under discussion are Tychonoff.

 Continuous actions of semitopological groups are considered in this paper. Recall that a group $G$ with a topology on the set $G$ that makes the multiplication $G\times G\to G$ separately continuous is called {\em semitopological}. A semitopological group $G$ is
{\em $\omega$-narrow} \cite{st} if for every neighborhood $U$ of the neutral element $\rm{e}$ in $G$ there is a countable set $A\subset G$ with $UA=AU=G$.

If not stated otherwise, we consider left actions $\theta:G\times X\to X$, where $G$ is a semitopological group and $X$ is a space. We denote $\theta (g,x)$ by $gx$ for all $g\in G$ and $x\in X$. If $\theta$ is continuous (resp., separately continuous), we say that the action is continuous (resp., separately continuous).
For any such an action  we consider the translations $\theta_g:X\to X$ and the maps $\theta^x:G\to X$ defined by $\theta_g(x)=gx$ and $\theta^x(g)=gx$. These two types of maps are continuous when $\theta$ is separately continuous. We also say that $\theta$ acts transitively on $X$ if all $\theta^x$, $x\in X$, are surjective maps. If the action $\theta$ can be extended to a continuous action $\widetilde\theta:G\times\widetilde X\to\widetilde X$, where $\widetilde X$ is a compactification of $X$, then $\widetilde X$ is called an {\em equivariant compactification of $X$}, or simply a $G$-compactification.

The paper is motivated by the celebrated theorem of Uspenskij \cite{u1}, \cite{u2} that a compactum $X$ is a Dugundji space provided $X$ admits a continuous transitive action of an $\omega$-narrow topological group. There is a growing interest recently in studying semitopological, quasitopological or semitopological groups versus topological groups, see \cite{ArTk}, \cite{ar}, \cite{st}.
In that direction we provide a generalization of Uspenskij's theorem in two directions. We consider spaces that are not necessarily compact
and actions not necessarily by topological groups. It is not clear to us whether our conditions are all essential, but we will show that some are, 
see Example 2.6.
A space is {\em $k$-separable} if it contains a dense $\sigma$-compact set. Our main result that we obtain by following Uspenskij's method of proof described in \cite{ArTk}, is:

\begin{thm}
Let $X$ admit a continuous transitive action 
of an $\omega$-narrow semitopological group $G$ such that $X$ has  a $G$-compactification. Then
\begin{itemize}
\item[(1)] $X$ is skeletally Dugundji provided it contains a dense \v{C}ech-complete $k$-separable subspace;
\item[(2)] $X$ is openly Dugundji if $X$ is \v{C}ech-complete and $\sigma$-compact.
\end{itemize}
\end{thm}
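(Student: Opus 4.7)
The plan is to adapt Uspenskij's inverse spectrum method, as presented in \cite{ArTk}, to the semitopological setting. Denote by $\widetilde{X}$ the given $G$-compactification, so that the extended action $\widetilde{\theta}\colon G\times\widetilde{X}\to\widetilde{X}$ is continuous, and fix a base point $x_{0}\in X$. Transitivity provides a continuous surjection $\theta^{x_{0}}\colon G\to X$ whose extension $\widetilde{\theta}^{x_{0}}\colon G\to\widetilde{X}$ has dense image. I would work primarily in $\widetilde{X}$, where compactness is available, and transfer the conclusions back to $X$ via the density of $X$ in $\widetilde{X}$.

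The core construction is as follows. Using $\omega$-narrowness of $G$, I would show that for every countable family $\mathcal{F}\subset C(\widetilde{X})$ there is a countable $\mathcal{F}^{*}\supset\mathcal{F}$ and a continuous surjection $q\colon\widetilde{X}\to K$ onto a metrizable compactum, such that $q$ factors every function in $\mathcal{F}^{*}$ and is equivariant for a continuous quotient action of $G$ on $K$. The enlargement $\mathcal{F}\mapsto\mathcal{F}^{*}$ is obtained by closing under the action of a countable subgroup $G_{0}\le G$, chosen by the $\omega$-narrow property to be sufficiently dense with respect to the neighborhoods relevant to $\mathcal{F}$. Iterating transfinitely yields a continuous inverse spectrum of second-countable compacta whose limit is $\widetilde{X}$.

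The main content is to show that the bonding maps behave well when restricted to $X$. For (1), the dense \v{C}ech-complete $k$-separable subspace is used to verify the skeletal property: preimages under bonding maps of nowhere dense sets are nowhere dense in $X$. \v{C}ech-completeness provides a dense $G_{\delta}$ on which Baire-category arguments apply, and $k$-separability keeps the approximating dense sets countable, matching the countable character of each stage of the spectrum. For (2), the stronger assumption that $X$ itself is \v{C}ech-complete and $\sigma$-compact allows one to upgrade skeletal to open bonding maps via a Michael-type selection or standard open mapping argument, using that a skeletal surjection between suitably nice spaces is open whenever the source is exhaustively $\sigma$-compact and \v{C}ech-complete.

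The principal obstacle I anticipate is verifying this skeletal or open behavior in the semitopological setting. In Uspenskij's original proof, joint continuity of the group multiplication is used to control the left uniformity of $G$ and thereby the fibers of each map $\theta^{x}$. Here one has only separate continuity in $G$, so one must substitute with the joint continuity of $\widetilde{\theta}\colon G\times\widetilde{X}\to\widetilde{X}$ afforded by the $G$-compactification, combined with transitivity to translate local structure around a chosen $x_{0}$ to arbitrary points of $X$. Once each bonding map is shown to be skeletal, respectively open, on $X$, the conclusion that $X$ is skeletally, respectively openly, Dugundji follows by the characterization of these classes as limits of inverse spectra of metrizable spaces with the corresponding class of bonding maps.
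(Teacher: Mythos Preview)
Your outline has the right general shape (Uspenskij's spectral method), but it misses the one move that makes the semitopological case work, and the argument you sketch for part~(2) is not correct.

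\textbf{The missing reduction.} The paper does not try to work around the lack of joint continuity in $G$. Instead, since the action extends continuously to a compactification $Y$, the assignment $g\mapsto\theta_{g}$ is a continuous homomorphism $\varphi\colon G\to\mathcal H(Y)$ into the homeomorphism group with the compact-open topology (Lemma~2.4). The image $\varphi(G)$ is then an $\omega$-narrow \emph{topological} group acting continuously on $Y$ and transitively on $X$, so one may simply assume $G$ is a topological group from the outset. Your plan to ``substitute with the joint continuity of $\widetilde\theta$'' never gets this far; without the reduction you only have Lemma~2.1(1) (the maps $\theta^{x}$ are skeletal), not Lemma~2.1(2) (they are nearly open), and nearly open is exactly what the argument for~(2) needs.

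\textbf{Part (2).} Your proposed upgrade ``skeletal $\Rightarrow$ open when the source is $\sigma$-compact and \v Cech-complete'' is false, and there is no Michael-type selection that produces it. The paper's mechanism is different: after the reduction above, each $p_{\alpha}|X$ is \emph{nearly open} (from Lemma~2.1(2)); the hypothesis that $X$ is \v Cech-complete and $\sigma$-compact is used to make $p_{\alpha}|X$ \emph{perfect} (by choosing the initial family $C_{0}$ so that $p_{0}$ separates the compact pieces of $X$ from the complement of $X$ in $Y$), and a closed nearly open map is open. Your sketch never obtains ``nearly open'' and never obtains ``perfect'', so it cannot reach ``open''.

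\textbf{The spectrum.} Closing a countable family $\mathcal F\subset C(Y)$ under a countable subgroup $G_{0}\le G$ is not how the metrizable-kernel step works; $\omega$-narrowness does not hand you a countable dense subgroup. The paper closes $\mathcal F$ under the full $G$-orbit and then shows (Lemmas~2.2 and~2.3) that each orbit $\{f\circ\theta_{g}:g\in G\}$ is a separable subset of the Banach space $C(Y)$, using that every $f\in C(Y)$ is right-uniformly continuous when $Y$ is compact. Separability of orbits, not a countable subgroup, is what makes the factor spaces second countable.

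\textbf{Role of the dense \v Cech-complete $k$-separable subspace.} This is not used merely for a Baire-category argument in the limit; it is built into $C_{0}$ so that each $X_{\alpha}$ inherits a dense \v Cech-complete subspace and is therefore Baire. Only then does Lemma~2.1 apply to the induced action on $X_{\alpha}$, which is what makes each limit projection skeletal (or nearly open).
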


Recall that a continuous map $f:X\to Y$ is {\em skeletal} \cite{mr} if  $\Int \overline{f(U)}\neq\varnothing$ for every open $U\subset X$, where $\overline{f(U)}$ denotes the closure of $f(U)$ in $Y$.
Skeletally Dugundji spaces were introduced in \cite{kpv1}. In a similar way we define openly Dugundji spaces:
A space $X$ is {\em skeletally $($resp., openly$)$ Dugundji} if there exists
a well ordered inverse system
$\displaystyle S=\{X_\alpha, p^{\beta}_\alpha, \alpha<\beta<\tau\}$ with surjective skeletal (resp., open) bonding maps, where $\tau$ is a cardinal,
satisfying the following conditions: (i) $X_0$ is a separable metrizable space
and all maps $p^{\alpha+1}_\alpha$ have metrizable kernels (i.e., there exists a separable metrizable space $M_\alpha$ such that
$X_{\alpha+1}$ is embedded in $X_{\alpha}\times M_\alpha$ and $p^{\alpha+1}_\alpha$ coincides with the restriction $\pi|X_{\alpha+1}$ of
the projection $\pi\colon X_{\alpha}\times M_\alpha\to X_{\alpha}$);
(ii) for any limit cardinal $\gamma<\tau$ the space $X_\gamma$ is a (dense)
subset of $\varprojlim\{X_{\alpha},p^\beta_\alpha, \alpha<\beta<\gamma\}$;
(iii) $X$ is embedded in $\varprojlim S$ such that $p_\alpha(X)=X_\alpha$ for each $\alpha$, where
$p_\alpha\colon\varprojlim S\to X_\alpha$ is the
$\alpha$-th limit projection; (iv) for every bounded continuous
real-valued function $f$ on $\varprojlim S$ there exists $\alpha\in A$ and a continuous function $g$ on  $X_\alpha$ with $f=g\circ p_\alpha$.
The inverse system $S$ is called {\em almost continuous} if it satisfies conditions (ii), and $X$ is said to be the {\em almost limit} of $S$ if condition (iii) holds, notation $X=\mathrm{a}-\varprojlim S$.  We also say that $S$ is factorizing if it satisfies condition (iv).

  {\em  Dugundji spaces} were introduced by Pelczynski \cite{p} as the compacta $X$ such that for every embedding of $X$ in another compactum $Y$ there is a regular linear extension operator $u:C(X)\to C(Y)$ between the Banach  spaces of all continuous functions on $X$ and $Y$. It was established by Haydon \cite{ha} that
 a compactum $X$ is Dugundji iff $X$ is the limit space of a well ordered inverse system satisfying the above conditions with all $p^{\beta}_\alpha$ being open. Equivalently, $X$ is a Dugundji space iff $X$ is a compact openly Dugundji space.
 There is a tight connection between skeletally Dugundji and Dugundji spaces: $X$ is skeletally Dugundji iff every compactification of $X$ is co-absolute with
a Dugundji space, see \cite[Theorem 3.3]{kpv1}.

Since any Baire space admitting a continuous and transitive action of an $\omega$-narrow topological group $G$ has a $G$-compactification (see \cite{chko1}, \cite{u1}), we have the following

\begin{cor}
Let a space $X$ admit a continuous transitive action of an
 $\omega$-narrow topological group. Then $X$ is skeletally Dugundji $($resp., openly Dugundji$)$ provided it contains a dense \v{C}ech-complete
 $k$-separable subset $($resp., $X$ is \v{C}ech-complete and $\sigma$-compact$)$.
 \end{cor}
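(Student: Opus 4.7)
The plan is to obtain the corollary as a direct consequence of Theorem~1.1 once the ``$G$-compactification'' hypothesis is verified. The authors have already quoted that any Baire space admitting a continuous transitive action of an $\omega$-narrow topological group $G$ possesses a $G$-compactification \cite{chko1,u1}, so the only point to check is that $X$ is a Baire space under either set of hypotheses.

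For case (2), $X$ itself is \v{C}ech-complete and therefore Baire by the classical Baire category theorem for \v{C}ech-complete spaces. For case (1), $X$ contains a dense \v{C}ech-complete subspace $Y$; since $Y$ is Baire, so is $X$. Indeed, if $\{U_n\}$ is a countable family of open dense subsets of $X$, then each $U_n\cap Y$ is open and dense in $Y$, hence $\bigcap_n(U_n\cap Y)$ is dense in $Y$, which makes $\bigcap_n U_n$ dense in $X$.

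With Baireness established in either situation, the cited result supplies a $G$-compactification $\widetilde X$ of $X$, so the full hypothesis of Theorem~1.1 is in force. Applying clause~(1) yields the skeletally Dugundji conclusion under the first alternative, and applying clause~(2) yields the openly Dugundji conclusion under the second.

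There is no real obstacle here beyond the verification of the Baire property: all the substantive content — the construction of the inverse system with skeletal or open bonding maps, the metrizable kernels, and the factorization property — is absorbed into Theorem~1.1. The corollary simply repackages that theorem in the more classical category of topological (rather than semitopological) group actions, where the existence of an equivariant compactification is automatic.
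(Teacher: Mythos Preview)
Your proof is correct and follows exactly the approach the paper indicates: the paper's one-line justification before the corollary is precisely that a Baire space with a continuous transitive action of an $\omega$-narrow topological group has a $G$-compactification \cite{chko1,u1}, after which Theorem~1.1 applies. You have simply made explicit the (routine) verification that $X$ is Baire under either hypothesis, which the paper leaves to the reader.
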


Note that Corollary 1.2 implies Uspenskij's theorem mentioned above.


It appears that continuous actions of semitopological groups on compact spaces can be reduced to continuous actions of topological groups. This simple observation combined with Uspenskij's theorem implies the following:

\begin{thm}
If an $\omega$-narrow  semitopological group $G$ acts continuously and transitively on a pseudocompact space $X$ and $G$ is a $k$-space, then $\beta X$ is a Dugundji space.
\end{thm}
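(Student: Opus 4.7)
The plan is to extend the action to the compactification $\beta X$, factor it through a topological group of homeomorphisms of $\beta X$, and then apply Uspenskij's theorem to the compactum $\beta X$.

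For the extension, each $\theta_g$ is a self-homeomorphism of $X$ (its inverse being $\theta_{g^{-1}}$), hence extends uniquely to a homeomorphism $\widetilde\theta_g\colon\beta X\to\beta X$. This defines $\widetilde\theta\colon G\times\beta X\to\beta X$ set-theoretically, and joint continuity will follow from two inputs. First, for every compact $K\subseteq G$ the product $K\times X$ is pseudocompact, so by Glicksberg's theorem $\beta(K\times X)=K\times\beta X$; consequently the continuous map $K\times X\to\beta X$ extends continuously to a map $K\times\beta X\to\beta X$, and by density this extension must coincide with $\widetilde\theta|_{K\times\beta X}$. Second, because $G$ is a $k$-space and $\beta X$ is compact, $G\times\beta X$ is a $k$-space, so continuity of $\widetilde\theta$ on every such compact slab $K\times\beta X$ forces joint continuity on the whole of $G\times\beta X$.

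I would then factor this action through a topological group. Equip $H=\mathrm{Homeo}(\beta X)$ with the compact-open topology, in which it is a topological group. The homomorphism $\Phi\colon G\to H$, $g\mapsto\widetilde\theta_g$, is continuous by the exponential law (continuity of $\widetilde\theta$ plus local compactness of $\beta X$). Its image $G^{\ast}:=\Phi(G)$ is a topological subgroup of $H$, and as a continuous homomorphic image of the $\omega$-narrow semitopological group $G$ it is itself $\omega$-narrow. The evaluation map $G^{\ast}\times\beta X\to\beta X$ is continuous, and for $x\in X$ the $G^{\ast}$-orbit of $x$ equals the $G$-orbit $Gx=X$, which is dense in $\beta X$. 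Uspenskij's theorem in the form for a dense orbit (equivalently, $\beta X$ is a $G^{\ast}$-compactification of the coset space $G^{\ast}/G^{\ast}_{x}\cong X$) then yields that $\beta X$ is a Dugundji space.

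I expect the principal difficulty to lie in the joint continuity of $\widetilde\theta$. The $k$-space assumption on $G$ and the pseudocompactness of $X$ enter in complementary and essential ways there: pseudocompactness, via Glicksberg, is what produces a \emph{continuous} extension on each compact slab, while the $k$-space property is what upgrades continuity on all such slabs to continuity on the full product $G\times\beta X$. A secondary subtlety is that the $G^{\ast}$-orbit of $x$ is merely dense in $\beta X$ rather than equal to it, so the transitive form of Uspenskij's theorem quoted in the introduction must be replaced by (the standard) dense-orbit form.
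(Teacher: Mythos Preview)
Your argument matches the paper's almost step for step: extend each $\theta_g$ to $\beta X$, use pseudocompactness of $K\times X$ and Glicksberg's theorem to get continuity of $\widetilde\theta$ on each compact slab $K\times\beta X$, then use that $G\times\beta X$ is a $k$-space to obtain joint continuity, and finally pass to the $\omega$-narrow topological subgroup $\Phi(G)\subseteq\mathcal H(\beta X)$ (this is exactly the paper's Lemma~2.4).

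The only divergence is in the last step, and there you take an unnecessary detour. You invoke a ``dense-orbit form'' of Uspenskij's theorem and assert $G^{\ast}/G^{\ast}_x\cong X$; neither is needed, and neither is cleanly available (the orbit map $G^{\ast}\to X$ need not be a quotient map, and a bare dense-orbit statement for $\omega$-narrow groups on compacta is not the standard formulation). The paper avoids this entirely: since the $\Phi(G)$-orbit of any $x\in X$ equals $X$, the group $\Phi(G)$ acts continuously and \emph{transitively} on the pseudocompact space $X$ itself, and one applies Uspenskij's theorem in its pseudocompact form \cite[Theorem~2]{u1} directly to $X$ to conclude that $\beta X$ is Dugundji. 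That is both simpler and already in the literature.
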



We also consider separately continuous actions of semitopological or topological groups.
Every separately continuous left action $\theta:G\times X\to X$  generates a right action
$\Theta:G\times C(X)\to C(X)$, defined by $\Theta(g,f)(x)=f(gx)$, see Lemma 2.2 below. It is easily seen that this action is separately continuous when $C(X)$ carries the pointwise convergence topology ($C(X)$ with this topology is denoted by $C_p(X)$). We say that $\theta$
is an {\em $s$-action}, if each orbit of $\Theta$ is a separable subset of $C_p(X)$. For example, if $G$ is separable and $\theta$ is separately continuous,
then $\theta$ is an $s$-action. According to Lemmas 2.2 and 2.3, this is also true if  $\theta$ is continuous and $X$ is compact.

The conclusion of Theorem 1.1 remains true if continuity of the action is weakened to separate continuity, but requiring additionally $\theta$ to be an $s$-action.

\begin{thm}
Let $X$ admit a separately continuous transitive action $\theta$
of an $\omega$-narrow semitopological group $G$ such that $\theta$ is extendable to a separately continuous $s$-action of $G$ over a
compactification of $X$. Then $X$ is skeletally Dugundji provided it contains a dense \v{C}ech-complete $k$-separable subspace.
\end{thm}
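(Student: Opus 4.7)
The plan is to adapt the proof of Theorem 1.1, substituting the $s$-action hypothesis for the joint continuity of $\theta$ used there. Let $\widetilde{X}$ denote the compactification of $X$ on which $\theta$ extends to a separately continuous $s$-action $\widetilde{\theta}$, and consider the induced separately continuous right action $\widetilde{\Theta}\colon G\times C_p(\widetilde{X})\to C_p(\widetilde{X})$, each of whose orbits is separable by hypothesis. I will build by transfinite recursion an almost continuous factorizing inverse system $S=\{X_\alpha,p^\beta_\alpha\}$ of compacta with surjective skeletal bonding maps, with $X_0$ metrizable and every $p^{\alpha+1}_\alpha$ of metrizable kernel, such that $\widetilde{X}=\varprojlim S$ and the natural embedding of $X$ into $\varprojlim S$ satisfies $p_\alpha(X)=X_\alpha$ for every $\alpha$. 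This exhibits $X$ as the almost limit of $S$ and shows that $X$ is skeletally Dugundji.

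The inductive step constructs $X_\alpha=\widetilde{X}/E_\alpha$, where $E_\alpha$ is the equivalence relation determined by a countable $G$-invariant family $\mathcal{F}_\alpha\subset C(\widetilde{X})$. Passing from stage $\alpha$ to stage $\alpha+1$ amounts to adjoining a single new function $f$, then replacing the enlarged family by the smallest $G$-invariant, $C_p$-closed, countably generated subalgebra containing it. The $s$-action hypothesis is decisive here: because each orbit $\widetilde{\Theta}(G,f)$ is a separable subset of $C_p(\widetilde{X})$, the $G$-invariant $C_p$-closure of a countable family remains separable, and passing to a countable $C_p$-dense subfamily yields a countable $\mathcal{F}_{\alpha+1}$ determining the same $E_{\alpha+1}$. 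The $\omega$-narrowness of $G$ lets one replace the full action of $G$ by the action of a fixed countable subset in checking invariance, keeping everything countable. Limit stages are handled by unions and closures in the standard way, yielding almost continuity; factorizing property (iv) is obtained by arranging the transfinite enumeration so that every bounded continuous function on $\widetilde{X}$ eventually enters some $\mathcal{F}_\alpha$.

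Two properties remain to be verified. Skeletalness of $p^\beta_\alpha$ follows from the transitivity of $\widetilde{\theta}$ together with separate continuity, exactly as in the proof of Theorem 1.1 and in \cite{kpv1}: the preimage of any nonempty open set is saturated by enough group translates that its image has nonempty interior. The equality $p_\alpha(X)=X_\alpha$ is where the dense \v{C}ech-complete $k$-separable subspace $Y\subset X$ enters: transitivity makes $X$ a single $G$-orbit in $\widetilde{X}$, so $p_\alpha(X)$ is a dense $G$-invariant subset of $X_\alpha$, and \v{C}ech-completeness together with $k$-separability of $Y$ lift through the skeletal quotient $p_\alpha|_{\widetilde{X}}$ to force $p_\alpha(X)$ to exhaust $X_\alpha$, as in the proof of Theorem 1.1. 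The \emph{main obstacle} is that, unlike in the continuous case, $G$ does not act by sup-norm isometries on $C(\widetilde{X})$, so all closure arguments must be performed in the pointwise topology, and the $s$-action hypothesis is what prevents the $G$-invariant closure of a countable family from becoming unmanageably large and breaking the inductive bookkeeping.
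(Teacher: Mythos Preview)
Your overall architecture matches the paper's: build an inverse system of quotients of the compactification by $G$-invariant families of continuous functions, use the $s$-action hypothesis (via Lemma~3.1) to get metrizable kernels, and conclude skeletally Dugundji. But two of the load-bearing steps are misattributed, and the skeletalness argument as you describe it does not go through.

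First, the r\^ole of the dense \v{C}ech-complete $k$-separable subspace $D$ is not to force $p_\alpha(X)=X_\alpha$; in the paper $X_\alpha$ is simply \emph{defined} as $p_\alpha(X)$, and one shows the compactification $Y$ is skeletally Dugundji via the system $\{Y_\alpha\}$, then invokes \cite[Theorem~3.3]{kpv1} for the dense subspace $X$. The hypothesis on $D$ is used earlier and for a different purpose: one writes $D$'s dense $\sigma$-compact subset and $Y\setminus D$ as countable unions of compacta, separates them by countably many functions $f_{ij}$, and places the $G$-orbits of the $f_{ij}$ into $C_0$. This is what makes each $X_\alpha$ contain a dense \v{C}ech-complete set, hence be a \emph{Baire space} (Claim~1). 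Your construction of $\mathcal F_0$ omits this, so you have no reason to know $X_\alpha$ is Baire.

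Second, skeletalness of $p_\alpha$ does not follow from ``transitivity of $\widetilde\theta$ together with separate continuity'' alone, and $\widetilde\theta$ is not assumed transitive on the compactification---only $\theta$ on $X$ is. The actual argument is: the induced action $\theta_\alpha$ on $X_\alpha$ is separately continuous and transitive; since $X_\alpha$ is Baire and $G$ is $\omega$-narrow, Lemma~2.1(1) gives that each orbit map $\theta_\alpha^{y}$ is skeletal; the diagram $p_\alpha\circ\theta^x=\theta_\alpha^{p_\alpha(x)}$ then yields $\Int\,\overline{p_\alpha(U\cap X)}\neq\varnothing$. So $\omega$-narrowness is consumed \emph{here}, in Lemma~2.1, not in ``replacing the full action of $G$ by a countable subset'' during the bookkeeping; and the Baire property of $X_\alpha$---which you have not secured---is indispensable. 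Your informal phrase ``saturated by enough group translates'' does not capture this mechanism.
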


For pseudocompact spaces we have the following analogue of  Uspenskij's result \cite[Theorem 2]{u1}.

\begin{thm}
If a pseudocompact space $X$ admits a separately continuous transitive $s$-action of an $\omega$-narrow semitopological group, then $X$ is skeletally Dugundji.
\end{thm}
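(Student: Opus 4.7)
The plan is to lift the action to the Stone--\v{C}ech compactification $\beta X$ and then apply (an adaptation of) Theorem 1.4. Since $X$ is pseudocompact, every continuous real-valued function on $X$ is bounded and extends uniquely to $\beta X$, so $C(X) = C(\beta X)$ as sets; in particular each translation $\theta_g$ extends to a continuous self-map $\widetilde\theta_g$ of $\beta X$, producing a map $\widetilde\theta\colon G\times\beta X\to\beta X$ that is continuous in its second variable. Continuity in the first variable at a point $y\in\beta X\setminus X$ is where the $s$-action hypothesis enters critically: for each $f\in C(\beta X)$ the orbit $\{f\circ\theta_g:g\in G\}$ is separable in $C_p(X)$ and uniformly bounded by $\|f\|_\infty$, so it sits in a compact subset of $[-\|f\|_\infty,\|f\|_\infty]^X$. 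A countable dense subset of that orbit controls every orbit function on $\beta X$ via pointwise limits, reducing continuity of $g\mapsto f(\widetilde\theta_g(y))$ to the already established continuity of $g\mapsto f(gx)$ for $x\in X$.

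A parallel boundedness-and-separability argument shows that $\widetilde\theta$ is still an $s$-action on $\beta X$: under the canonical bijection $C(X)\leftrightarrow C(\beta X)$, orbits of $\widetilde\Theta$ in $C_p(\beta X)$ correspond to orbits of $\Theta$ in $C_p(X)$, and separability transfers across the correspondence via the same uniform boundedness. At this stage $\beta X$ is a $G$-compactification of $X$ on which the extended action is a separately continuous $s$-action.

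The final step is to invoke Theorem 1.4. One cannot apply it verbatim, because pseudocompactness of $X$ does not force $X$ itself to contain a dense \v{C}ech-complete $k$-separable subspace. Instead, I would run the proof of Theorem 1.4 on $\beta X$: the compactification is itself \v{C}ech-complete and $\sigma$-compact, and the dense subspace $X\subseteq\beta X$ --- a single transitive $G$-orbit --- provides the input that the Uspenskij-style construction needs. This produces a well-ordered inverse system with skeletal bonding maps whose almost limit is $\beta X$, so $\beta X$ is skeletally Dugundji, hence co-absolute with a Dugundji space; by \cite[Theorem 3.3]{kpv1}, $X$ is then skeletally Dugundji. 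The main obstacle will be precisely this adaptation: reconciling the absence of a dense \v{C}ech-complete $k$-separable subspace of $X$ with the inverse-system construction of Theorem 1.4, which requires substituting $\sigma$-compactness of $\beta X$ for $k$-separability of $X$ and leveraging $\omega$-narrowness of $G$ together with the $s$-action hypothesis to obtain the countable factorizations underlying condition (iv).
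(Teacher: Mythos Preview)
Your approach via $\beta X$ runs into a genuine obstacle at the very first step. You need the extended map $\widetilde\theta$ to be continuous in the $G$-variable at each $y\in\beta X$, i.e., that $g\mapsto f(\widetilde\theta_g(y))$ is continuous for every $f\in C(\beta X)$. Writing this as $g\mapsto \widetilde{\Theta(g,f)}(y)$, you are composing the map $g\mapsto\Theta(g,f)$---which is continuous only into $C_p(X)$---with evaluation at $y$. But evaluation at a point $y\in\beta X\setminus X$ is \emph{not} continuous for the $C_p(X)$ topology: pointwise convergence on $X$ does not force convergence of the $\beta X$-extensions at remainder points. Your compactness remark does not help, because the closure of the orbit in $[-\|f\|,\|f\|]^X$ will in general contain discontinuous functions, on which the extension operator $h\mapsto\tilde h$ is undefined. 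The same defect undermines your second step: the identification $C(X)\leftrightarrow C(\beta X)$ is a continuous bijection from $C_p(\beta X)$ to $C_p(X)$ but not a homeomorphism, so separability of an orbit in $C_p(X)$ does not yield separability of the corresponding orbit in $C_p(\beta X)$. (Contrast this with the \emph{continuous}-action case, where Lemma~2.3 gives norm-separable orbits, and norm convergence \emph{does} control values on all of $\beta X$; the $s$-action hypothesis is strictly weaker and gives no such control.)

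The paper sidesteps $\beta X$ altogether and runs the inverse-system construction directly on $X$. The key observation you are missing is that pseudocompactness passes to continuous images, so every $X_\alpha=p_\alpha(X)$ is pseudocompact and hence Baire; this is what replaces the dense \v{C}ech-complete $k$-separable subspace used in Claim~1 of Theorem~1.1. One takes $C_0$ to consist of the constant functions (so $X_0$ is a single point), builds the $C_\alpha$ from $\Theta$-orbits as before, and uses Lemma~2.1(1) together with the argument of Claim~2 to see that each $p_\alpha$ is skeletal. For the metrizable kernels, Lemma~3.1 makes $q_\alpha(X)$ sub-metrizable, and since it is also pseudocompact it is a metric compactum by Veli\v{c}ko's theorem. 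This produces the required factorizing, almost-continuous skeletal system with $X$ as its almost limit, so $X$ is skeletally Dugundji without any detour through a compactification.
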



Concerning separately continuous actions of topological groups we have the following fact:
\begin{pro}
If a pseudocompact $($resp., \v{C}ech-complete and $\sigma$-compact$)$ space $X$ admits a separately continuous transitive action of an $\omega$-narrow topological group, then $\beta X$ is Dugundji $($resp., $X$ is openly Dugundji$)$.
\end{pro}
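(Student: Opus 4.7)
The overall strategy is, in each case, to upgrade the separately continuous action to a jointly continuous one and then apply previously stated results.  The crux is that for a \emph{topological} group $G$---as opposed to a merely semitopological one---separate continuity interacts well with Namioka-type theorems and with the standard homogeneity arguments coming from continuity of inversion.

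For the \v{C}ech-complete $\sigma$-compact case, I would begin by showing that the separately continuous action $\theta\colon G\times X\to X$ is in fact jointly continuous.  Since $X$ is \v{C}ech-complete, hence Baire, a Namioka-type theorem yields at least one point $(g_0,x_0)\in G\times X$ at which $\theta$ is jointly continuous.  Because $G$ is a topological group and $\theta$ is transitive, a standard homogeneity argument---writing $gx=g_1\cdot(g_1^{-1}g)\cdot x$, using continuity of individual translations, and using transitivity to transport joint continuity at $(e,x_0)$ to $(e,x)$ for every $x\in X$---propagates joint continuity to all of $G\times X$.  Once $\theta$ is continuous, Corollary~1.2 applies directly and delivers that $X$ is openly Dugundji.

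For the pseudocompact case, I would extend $\theta$ to a continuous action on $\beta X$ and then invoke Uspenskij's theorem there.  Since $G$ is a topological group, each $\theta_g\colon X\to X$ is a homeomorphism (with inverse $\theta_{g^{-1}}$), so it extends uniquely to a homeomorphism $\widetilde\theta_g$ of $\beta X$; this yields a group action $G\to\operatorname{Homeo}(\beta X)$.  The resulting map $\widetilde\theta\colon G\times\beta X\to\beta X$ is separately continuous: continuity in the $\beta X$ variable is automatic, while continuity of the orbit map $g\mapsto\widetilde\theta_g(x)$ for an arbitrary $x\in\beta X$ follows from pseudocompactness of $X$ together with the density of the orbit $G\cdot x_0=X$ in $\beta X$.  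A Namioka-type theorem on the compact space $\beta X$, combined with the same homogeneity propagation starting from the orbit of a point of $X$, upgrades $\widetilde\theta$ to a jointly continuous action.  Uspenskij's theorem for continuous actions of $\omega$-narrow topological groups on compacta (in the form allowing a dense orbit rather than strict transitivity, obtained by quotienting $G$ through the stabilizer of $x_0$) then yields that $\beta X$ is Dugundji.

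The main obstacle is the passage from separate to joint continuity, which relies on a Namioka-type theorem and is where the hypothesis that $G$ is a topological group (and not merely semitopological) is indispensable: it is the continuity of inversion that drives the homogeneity argument.  A secondary difficulty, specific to the pseudocompact case, is the extension of the separately continuous action from $X$ to $\beta X$; this requires careful use of pseudocompactness and the density of a single orbit in $\beta X$ to handle points outside $X$.
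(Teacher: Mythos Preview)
Your overall strategy---upgrade the separately continuous action to a jointly continuous one and then invoke the known results---is exactly what the paper does, but the paper's route to joint continuity is shorter and avoids the difficulties you flag. Both pseudocompact and \v{C}ech-complete spaces are Baire, so Lemma~2.1(2) applies immediately: every orbit map $\theta^{x}\colon G\to X$ is nearly open. The paper then cites \cite[Proposition~2]{chko}, which says precisely that a separately continuous action of a topological group with nearly open orbit maps is jointly continuous. This single citation replaces your entire Namioka-plus-homogeneity argument, and it works uniformly in both cases without requiring any metrizability of the target or compactness of either factor.

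Once $\theta$ is continuous on $X$, the paper finishes in one line per case: Corollary~1.2 in the \v{C}ech-complete $\sigma$-compact case, and Uspenskij's \cite[Theorem~2]{u1} in the pseudocompact case. Note that \cite[Theorem~2]{u1} already has the form ``if a pseudocompact $X$ admits a continuous transitive action of an $\omega$-narrow topological group, then $\beta X$ is Dugundji,'' so there is no need to pass to $\beta X$ yourself. Your detour through $\beta X$ is both unnecessary and problematic: your claim that the orbit map $g\mapsto\widetilde\theta_{g}(x)$ is continuous for $x\in\beta X\setminus X$ does not follow from density of $X$ and pseudocompactness alone (pointwise limits of continuous orbit maps need not be continuous), and Uspenskij's compact-case theorem requires genuine transitivity, not merely a dense orbit. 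Drop that detour, get joint continuity on $X$ first, and cite \cite[Theorem~2]{u1} directly.
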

The paper is organizing as follows: the proofs of Theorems 1.1 and 1.3 are given in Section 2; Section 3 contains the proofs of Theorems 1.4 - 1.5 and Proposition 1.6.

\section{Continuous actions}

Our first lemma
is a version of \cite[Proposition 10.3.1]{ArTk}. Recall that a map $f:X\to Y$ is nearly open if $f(U)\subset\Int \overline{f(U)}$ for every open $U\subset X$, see \cite{ArTk}.
\begin{lem}
Let $\theta: G\times X\to X$ be a separately continuous transitive action on a Baire space $X$.
\begin{itemize}
\item[(1)] If $G$ is an $\omega$-narrow semitopological group, then all maps $\theta^x:G\to X$, $x\in X$, are skeletal;
\item[(2)] If, in addition to $(1)$, $G$ is a topological group, then $\theta^x$ are nearly open maps.
\end{itemize}
\end{lem}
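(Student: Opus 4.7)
My plan is to exploit two structural features: that in any separately continuous group action each translation $\theta_g\from X\to X$ is a homeomorphism (with continuous inverse $\theta_{g^{-1}}$), and that $\omega$-narrowness turns neighborhoods of the identity into countable coverings well suited to Baire category.

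For (1), I would fix $x\in X$ and a nonempty open $U\con G$, pick any $g\in U$, and set $V=g^{-1}U$. Since left translation in a semitopological group is a homeomorphism, $V$ is an open neighborhood of the neutral element $\mathrm e$. By $\omega$-narrowness there is a countable $A\con G$ with $AV=G$, so transitivity yields
\[
X=Gx=\bigcup_{a\in A}aVx=\bigcup_{a\in A}\theta_{ag^{-1}}(Ux).
\]
Passing to closures gives the countable covering
\[
X=\bigcup_{a\in A}\theta_{ag^{-1}}\bigl(\overline{Ux}\bigr)
\]
of the Baire space $X$ by closed sets, so some $\theta_{ag^{-1}}(\overline{Ux})$ has nonempty interior; applying the homeomorphism $\theta_{ga^{-1}}$ transports this interior back into $\overline{Ux}$.

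For (2), I would first reduce the claim: the assertion $\theta^x(U)\con\Int\overline{\theta^x(U)}$ says $gx\in\Int\overline{Ux}$ for every $g\in U$, which by the homeomorphism $\theta_{g^{-1}}$ is equivalent to $x\in\Int\overline{Wx}$ for the open neighborhood $W=g^{-1}U$ of $\mathrm e$. So it suffices to show that $x\in\Int\overline{Wx}$ for every open $W\ni\mathrm e$. Using joint continuity of multiplication and continuity of inversion in the topological group $G$, choose a symmetric open $V\ni\mathrm e$ with $VV\con W$. By (1), $\overline{Vx}$ contains a nonempty open set $O$; because $O$ is open and sits inside $\overline{Vx}$, it must meet $Vx$, giving some $v_0\in V$ with $v_0x\in O$. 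Then $\theta_{v_0^{-1}}(O)$ is an open set containing $x$, and the inclusion $v_0^{-1}V\con V^{-1}V=VV\con W$ yields
\[
\theta_{v_0^{-1}}(O)\con\theta_{v_0^{-1}}\bigl(\overline{Vx}\bigr)=\overline{v_0^{-1}Vx}\con\overline{Wx},
\]
so $x\in\Int\overline{Wx}$ as required.

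The main nuisance to monitor is the bookkeeping with noncommutativity of translations, especially in (1): one must feed the \emph{translated} neighborhood $g^{-1}U$ rather than $U$ into $\omega$-narrowness and then undo the translation after invoking Baire category. In (2), the step genuinely requiring the topological-group hypothesis (as opposed to the merely semitopological hypothesis of (1)) is the production of a symmetric $V$ with $VV\con W$; this is precisely what promotes skeletality to near-openness.
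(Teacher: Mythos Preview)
Your argument is correct and, for part~(1), follows the same line as the paper: $\omega$-narrowness plus transitivity gives a countable cover of the Baire space $X$ by translates of $\overline{Ux}$, and one of these translates has nonempty interior, which is then pushed back by a homeomorphic translation. The only cosmetic difference is that the paper first treats neighborhoods of $\mathrm e$ and then reduces an arbitrary open $U$ to that case via $gV\subset U$, whereas you fold the reduction into a single pass by working with $V=g^{-1}U$ from the start.

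For part~(2) you actually go further than the paper, which simply remarks that the proof of \cite[Proposition~10.3.1]{ArTk} carries over to separately continuous actions of $\omega$-narrow groups on Baire spaces. Your self-contained argument is the standard one behind that reference: reduce to $x\in\Int\overline{Wx}$ for $W\ni\mathrm e$, pick a symmetric $V$ with $VV\subset W$, use part~(1) to find open $O\subset\overline{Vx}$, locate $v_0x\in O$, and translate back by $v_0^{-1}$. The place where the topological-group hypothesis enters---the existence of a symmetric $V$ with $VV\subset W$---is exactly where you say it does. So your write-up is both correct and more informative than the paper's citation.
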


\begin{proof}
$(1)$ We first prove that $\overline{\theta^x(U)}\neq\varnothing$ for every  neighborhood $U$ of the identity $\rm e$ in $G$.
Suppose $x\in X$ and $U$ is a neighborhood of $\rm e$. Since $G$ is $\omega$-narrow, there is a countable set $A\subset G$ with $AU=G$.
It follows from the transitivity of $\theta$ that the map $\theta^x$ is surjective. Hence, $\{\theta^x(gU)=gUx:g\in A\}$ is a countable cover of $X$, so is $\{\overline{gUx}:g\in A\}$. Because $X$ is a Baire space, there is $g\in A$ with ${\rm{Int}}{~}\overline{gUx}\neq\varnothing$. Consequently, ${\rm{Int}}{~}\overline{Ux}\neq\varnothing$ (recall that the translation $\theta_g:X\to X$ is a homeomorphism).

If $U\subset G$ is a non-empty open set, we choose $g\in U$ and a neighborhood $V$ of $\rm e$ with $gV\subset U$. Then ${\rm{Int}}{~}\overline{Vx}\neq\varnothing$, so  ${\rm{Int}}{~}\overline{gVx}\neq\varnothing$. Finally, since $gVx\subset Ux$, we have
${\rm{Int}}{~}\overline{Ux}\neq\varnothing$. Therefore, $\theta^x$ is skeletal.

$(2)$ This item follows from the observation that the proof of \cite[Proposition 10.3.1]{ArTk} remains true for separately continuous actions of $\omega$-narrow groups on Baire spaces.
\end{proof}
If a semitopological group $G$ acts on a space $X$, we say that a function $f\in C^*(X)$ is {\em right-uniformly continuous} if for every $\varepsilon>0$ there is neighborhood $O$ of the neutral element $\rm e$ such that for all $g\in O$ and $x\in X$ we have $|f(x)-f(gx)|<\varepsilon$. We denote the set of all
right-uniformly continuous functions on $X$ by $C^*_{r,G}(X)$.
\begin{lem}
Let $X$ be a space and $\theta:G\times X\to X$ be a continuous action of a semitopological group $G$ on $X$. Then the right
action $\Theta:G\times C^*(X)\to C^*(X)$, $\Theta(g,f)=f\circ\theta_g$, is well defined and  each translation $\Theta_g:C^*(X)\to C^*(X)$ is a linear isometry on the Banach space $C^*(X)$. Moreover, if $X$ is compact then $C(X)=C^*_{r,G}(X)$.
\end{lem}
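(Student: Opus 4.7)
The plan is to verify the four assertions in turn, all of which reduce to short checks. For the first three, since $\theta$ is continuous each translation $\theta_g\colon X\to X$ is continuous, and paired with $\theta_{g^{-1}}$ it is in fact a self-homeomorphism of $X$. Hence $f\circ\theta_g\in C^*(X)$ whenever $f\in C^*(X)$, and the bijectivity of $\theta_g$ gives $\|\Theta_g(f)\|_\infty=\|f\|_\infty$; linearity of $\Theta_g$ is immediate, and the right-action identities $\Theta(\mathrm{e},f)=f$ and $\Theta(gh,f)=\Theta(h,\Theta(g,f))$ follow by direct substitution into the defining formula $\Theta(g,f)(x)=f(gx)$.

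The only substantive content is the equality $C(X)=C^*_{r,G}(X)$ in the compact case. The inclusion $C^*_{r,G}(X)\subseteq C(X)$ is built into the definition (noting $C(X)=C^*(X)$ by compactness). For the reverse inclusion, I would fix $f\in C(X)$ and $\varepsilon>0$, and exploit joint continuity of $\theta$ at each pair $(\mathrm{e},x)\in G\times X$ to obtain open sets $O_x\ni\mathrm{e}$ and $U_x\ni x$ with $|f(gy)-f(x)|<\varepsilon/2$ for all $(g,y)\in O_x\times U_x$. Compactness of $X$ furnishes a finite subcover $\{U_{x_1},\ldots,U_{x_n}\}$; set $O:=O_{x_1}\cap\cdots\cap O_{x_n}$, which is an open neighborhood of $\mathrm{e}$. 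For any $g\in O$ and $x\in X$, choosing $i$ with $x\in U_{x_i}$ yields $|f(x)-f(x_i)|<\varepsilon/2$ (apply the estimate at $g=\mathrm{e}$) and $|f(gx)-f(x_i)|<\varepsilon/2$, so the triangle inequality delivers $|f(x)-f(gx)|<\varepsilon$.

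The one point requiring a bit of vigilance is that $G$ is only semitopological, so the argument must avoid any appeal to joint continuity of multiplication in $G$. The sketch above does this: it uses only joint continuity of the action $\theta$ and the trivial fact that finite intersections of open sets are open. Consequently I do not anticipate any genuine obstacle in carrying out the plan.
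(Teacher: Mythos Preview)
Your proposal is correct and follows essentially the same route as the paper: both verify the algebraic/isometry properties directly from the fact that each $\theta_g$ is a self-homeomorphism, and for the compact case both use a standard tube-lemma/finite-subcover argument at $(\mathrm{e},x)$ to produce the single neighborhood $O$ of $\mathrm{e}$. The only cosmetic difference is that the paper first picks $W_x$ by continuity of $f$ and then $O_x\times V_x\subset\theta^{-1}(W_x)$ by continuity of $\theta$, whereas you merge these into one step via continuity of $f\circ\theta$ at $(\mathrm{e},x)$.
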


\begin{proof}
For every $g\in G$ and $f\in C^*(X)$ we have $\Theta(g,f)(x)=f(gx)$, $x\in X$. Because $\theta_g:X\to X$ is surjective,
$||\Theta(g,f)||=||f||$. It is obvious that each map $\Theta_g:C^*(X)\to C^*(X)$, $\Theta_g(f)=f\circ\theta_g$, is linear.
So, all $\Theta_g$ are linear isometries. Moreover, for every $g,h\in G$ and $f\in C^*(X)$ we have $\Theta(g,\Theta(h,f))=\Theta(g,f\circ\theta_h)=f\circ\theta_h\circ\theta_g$. Since
$\theta_h\circ\theta_g=\theta_{hg}$, $\Theta(g,\Theta(h,f))=f\circ\theta_{hg}=\Theta(hg,f)$. So, $\Theta$ is a right action.

Suppose $X$ is compact. Let $f\in C(X)$ and $\varepsilon>0$. For every $x\in X$ choose a neighborhood $W_x$ of $x$ in $X$ such that $|f(x')-f(x)|<\varepsilon/2$ for all $x'\in W_{x}$. Because $\theta$ is continuous, there exists a neighborhood $O_x$ of $\rm e$ in $G$ and a neighborhood $V_x\subset W_{x}$ of $x$ in $X$ such that $\theta (O_x\times V_x)\subset W_{x}$. Let $\{V_{x_i}:i=1,2,..,k\}$ be a finite subcover of the cover $\{V_x : x\in X\}$, and let $O=\bigcap_{i=1}^{k}O_{x_i}$. Then, for every $g\in O$ and $x\in X$ there is $j\leq k$
with $x\in V_{x_j}$ and
$gx\in W_{x_j}$. Hence, $|f(gx)-f(x_j)|<\varepsilon/2$ and $|f(x)-f(x_j)|<\varepsilon/2$. Consequently, $|f(gx)-f(x)|<\varepsilon$,
which completes the proof of the claim.
\end{proof}

The proof of next lemma is a slight modification of the proof of \cite[Lemma 10.3.2]{ArTk}, it is included for the sake of completeness.
\begin{lem}
Let $\theta:G\times X\to X$ be a continuous  action of an $\omega$-narrow semitopological group on a space $X$ and
$\Theta:G\times C^*(X)\to C^*(X)$ be the action from Lemma $2.2$. Then the orbit $fG=\{\Theta(g,f):g\in G\}$ is a separable subset of the Banach space $C^*(X)$ for each $f\in C^*_{r,G}(X)$.
\end{lem}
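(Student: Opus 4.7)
The plan is to build a countable dense subset of $fG$ directly, by using $\omega$-narrowness to cover $G$ at arbitrarily fine scales and using right-uniform continuity of $f$ to convert the fineness of the cover into norm-closeness on the orbit. For every $n\in\mathbb N$, the assumption $f\in C^*_{r,G}(X)$ yields a neighborhood $U_n$ of $\mathrm e$ such that $|f(x)-f(ux)|<1/n$ for all $u\in U_n$ and $x\in X$, that is, $\|\Theta(u,f)-f\|\le 1/n$. Since $G$ is $\omega$-narrow, there is a countable set $A_n\subset G$ with $U_nA_n=G$. Set $A=\bigcup_n A_n$, a countable subset of $G$; the candidate dense set will be $\{\Theta(a,f):a\in A\}$.

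The core observation is the identity
\[
\|\Theta(uh,f)-\Theta(h,f)\|=\|\Theta(u,f)-f\|\qquad(u,h\in G),
\]
which I would verify as follows. By the right-action property from Lemma 2.2, $\Theta(uh,f)=\Theta(h,\Theta(u,f))=\Theta_h(\Theta(u,f))$ and $\Theta(h,f)=\Theta_h(f)$, so the left-hand side equals $\|\Theta_h(\Theta(u,f)-f)\|$. Lemma 2.2 says $\Theta_h$ is a linear isometry of $C^*(X)$, so this is $\|\Theta(u,f)-f\|$, as claimed. (Alternatively: write the sup pointwise and use that $x\mapsto hx$ is a bijection of $X$.)

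Combining the two ingredients: given $g\in G$ and $n\in\mathbb N$, write $g=ua$ with $u\in U_n$ and $a\in A_n\subset A$. Then
\[
\|\Theta(g,f)-\Theta(a,f)\|=\|\Theta(u,f)-f\|<1/n,
\]
so $\Theta(g,f)$ lies within $1/n$ of the countable set $\{\Theta(a,f):a\in A\}$. Since $n$ was arbitrary, this countable set is dense in $fG$, which establishes separability in the Banach space $C^*(X)$.

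There is no serious obstacle here; the essential point is to recognize that $\omega$-narrowness produces countably many approximants only up to translation by a small neighborhood, so one needs the $\Theta_h$-invariance of the norm (ensured by Lemma 2.2) to transfer the smallness of $\Theta(u,f)-f$ to the smallness of $\Theta(uh,f)-\Theta(h,f)$. The hypothesis $f\in C^*_{r,G}(X)$ is used precisely to make each $U_n$ available; no further use of continuity of $\theta$ is needed in the argument itself, since continuity has already been invoked to ensure (via Lemma 2.2) that $C(X)\subset C^*_{r,G}(X)$ when $X$ is compact and to guarantee the isometry property of $\Theta_h$.
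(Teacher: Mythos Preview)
Your proof is correct and follows essentially the same route as the paper's: choose neighborhoods $U_n$ via right-uniform continuity, cover $G$ by $U_nA_n$ via $\omega$-narrowness, and use that each $\Theta_h$ is a linear isometry to deduce $\|\Theta(ua,f)-\Theta(a,f)\|=\|\Theta(u,f)-f\|<1/n$. The only difference is cosmetic---you state the isometry identity once in general, while the paper carries the computation out for the specific factorization $g=g_na_n$.
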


\begin{proof}
We fix $f\in C^*_{r,G}(X)$. Then for every $n\in\mathbb N$ there is a neighborhood $U_n$ of $\rm e$ with $||\Theta(g,f)-f||<1/n$ for all $g\in U_n$. Because $G$ is $\omega$-narrow, for each $n$ there exists a countable set $A_n\subset G$ such that $U_nA_n=G$. Hence, $A=\bigcup_{n=1}^\infty A_n$ is also countable and it suffices to show that $fA=\{\Theta(a,f):a\in A\}$ is dense in $fG$. To this end, let $g\in G$ and for each $n$ choose $g_n\in U_n$ and $a_n\in A_n$ with $g=g_na_n$. Then $\Theta(g,f)=\Theta_{g_na_n}(f)=\Theta_{a_n}(\Theta_{g_n}(f))$. So,
$$||\Theta(g,f)-\Theta(a_n,f)||=||\Theta_g(f)-\Theta_{a_n}(f)||=||\Theta_{a_n}(\Theta_{g_n}(f))-\Theta_{a_n}(f)||.$$ Because $\Theta_{a_n}$ is an isometry,
we obtain $||\Theta(g,f)-\Theta(a_n,f)||=||\Theta_{g_n}(f)-f||=||\Theta(g_n,f)-f||<1/n$. Therefore, every neighborhood of $\Theta(g,f)$ meets $Af$, which is as required.
\end{proof}

For any compact space $X$ let $\mathcal H(X)$ be the homeomorphism group of $X$ with the compact-open topology. It is well known \cite{are} that $\mathcal H(X)$ is a topological group. We need the following observation:

\begin{lem}
Let $X$ be a compact space admitting a continuous action $\theta:G\times X\to X$ of a semitopological group $G$. Then the homomorphism $\varphi:G\to\mathcal H(X)$, defined by $\varphi(g)=\theta_g$, is continuous.
\end{lem}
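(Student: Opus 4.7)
The plan is to verify continuity of $\varphi$ pointwise by testing against a subbase for the compact-open topology on $\mathcal H(X)$. Recall that this topology is generated by the sets $[K,U]=\{h\in\mathcal H(X):h(K)\subset U\}$ with $K\subset X$ compact and $U\subset X$ open, so for each $g_0\in G$ and each such subbasic neighborhood $[K,U]$ of $\varphi(g_0)=\theta_{g_0}$ it suffices to produce an open neighborhood $V$ of $g_0$ in $G$ with $\theta_g(K)\subset U$ for every $g\in V$.

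Before that I would briefly record well-definedness: for every $g\in G$ the map $\theta_g\colon X\to X$ is continuous as the restriction of the continuous $\theta$ to $\{g\}\times X$, and $\theta_{g^{-1}}$ is a continuous two-sided inverse because $\theta_g\circ\theta_{g^{-1}}=\theta_e=\mathrm{id}_X$. Hence $\theta_g\in\mathcal H(X)$ for every $g\in G$.

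The continuity step itself is a standard tube-lemma argument. For each $x\in K$ we have $\theta(g_0,x)\in U$, and by joint continuity of $\theta$ at $(g_0,x)$ there are open neighborhoods $V_x\ni g_0$ in $G$ and $W_x\ni x$ in $X$ with $\theta(V_x\times W_x)\subset U$. Compactness of $K$ reduces the open cover $\{W_x\}_{x\in K}$ to a finite subcover $W_{x_1},\dots,W_{x_n}$, and $V:=\bigcap_{i=1}^n V_{x_i}$ is the required neighborhood of $g_0$: given $g\in V$ and $x\in K$, pick $j$ with $x\in W_{x_j}$; then $\theta_g(x)=\theta(g,x)\in U$, so $\varphi(V)\subset[K,U]$.

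There is essentially no obstacle here. The argument uses only joint continuity of $\theta$ and compactness of $K$; the semitopological structure of $G$ plays no further role, and inversion in $G$ is used purely algebraically to certify that $\theta_g$ has an inverse as a map. This simplicity is exactly what makes the lemma a convenient bridge, since Arens's theorem then allows one to replace the continuous action of the semitopological group $G$ on the compactum $X$ by the continuous action of the topological group $\overline{\varphi(G)}\subset\mathcal H(X)$.
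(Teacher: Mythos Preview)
Your proof is correct and follows essentially the same route as the paper: test continuity against the subbasic sets $[K,U]$ and use a tube-lemma argument exploiting compactness of $K$ and joint continuity of $\theta$. The paper compresses the tube lemma into the single observation that $\theta^{-1}(U)$ is an open set containing $\{g_0\}\times K$, whereas you spell out the pointwise neighborhoods $V_x\times W_x$ and take a finite intersection; this is purely cosmetic.
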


\begin{proof}
Let $K\subset X$ be compact and $U\subset X$ be open. Consider the subbasic open set $[K,U]=\{h\in\mathcal H(X): h(K)\subset U\}$ in $\mathcal H(X)$. Take an
arbitrary $g\in\varphi^{-1}([K,U])$. Then $\varphi(g)=\theta_g\in [K,U]$, hence $gK\subset U$. Consider the open set $\theta^{-1}(U)$ in $G\times X$. It contains $\{g\}\times K$. So, by the compactness of $K$, there is a neighborhood $V$ of $g$ in $G$ such that $V\times K\subset\theta^{-1}(U)$. Hence, $g'\in V$ implies $g'K\subset U$, which means that $V\subset\varphi^{-1}([K,U])$.
\end{proof}

\begin{proof}[Proof of Theorem 1.1.]
Suppose $X$ contains a dense \v{C}ech-complete $k$-separable subspace $D$. Suppose also that $\theta:G\times X\to X$ is a continuous transitive action of a semitopological $\omega$-narrow group on $X$ such that $\theta$ can be extended to a continuous action $\widetilde\theta:G\times Y\to Y$, where $Y$ is a compactification of $X$. According to Lemma 2.4, the image $\varphi(G)$ is a topological $\omega$-narrow subgroup of $\mathcal H(Y)$. Since $\mathcal H(Y)$ acts continuously on $Y$, so does $\varphi(G)$. On the other hand, $\varphi(G)$ acts transitively on $X$ because $G$ does. Therefore, we may assume that $G$ is an $\omega$-narrow topological group such that $\widetilde\theta$ is a continuous action on $Y$ and its restriction
$\theta$ is a continuous and transitive action on $X$. According to Lemma 2.2, each $f\in C(Y)$ is right-uniformly continuous. 
As above, $\widetilde\theta(g,y)$ is denoted by $gy$ for all $g\in G$ and $y\in Y$. The action $\widetilde\theta$ generates a right continuous action
$\widetilde\Theta:G\times C(Y)\to C(Y)$, defined by $\widetilde\Theta(g,f)(y)=f(gy)$ (see Lemma 2.2).

Because $D$ is $k$-separable, it contains a dense $\sigma$-compact set $Z$. Let $Z=\bigcup_{i=1}^\infty F_i$ and $Y\setminus D=\bigcup_{i=1}^\infty Y_i$, where $F_i$ and $Y_i$ are compact sets. Since $F_i\cap Y_j=\varnothing$, for each $i,j$ there is $f_{ij}\in C(Y)$ with $f_{ij}(F_i)\cap f_{ij}(Y_j)=\varnothing$. Let $C_0=\{\widetilde\Theta(g,f_{ij}):g\in G, i,j=1,2,..\}$ and
 $C(Y)\setminus C_0=\{f_\gamma:\gamma<\tau\}$, where $\tau$ is the cardinality of $C(Y)\setminus C_0$. For every cardinal $0<\alpha<\tau$ define $C_\alpha=C_0\cup\{\widetilde\Theta(g,f_\gamma):g\in G, \gamma<\alpha\}$. Then
$C(Y)=\bigcup_{0\leq\alpha<\tau}C_\alpha$ and $Y$ is embedded in $\mathbb R^{C(Y)}$ by identifying each $y\in Y$ with $\big(f(y)\big)_{f\in C(Y)}$. We consider the natural projections $p_\alpha:Y\to\mathbb R^{C_\alpha}$ for $\alpha\geq 0$  and $p^\beta_\alpha:Y_\beta\to Y_\alpha$ for $\beta>\alpha$, where $Y_\beta=p_\beta(Y)$ and $Y_\alpha=p_\alpha(Y)$. Obviously all $p_\alpha$ and $p^\beta_\alpha$ are continuous and $p_\alpha=p^\beta_\alpha\circ p_\beta$ for $\beta>\alpha$.
Observe that $p_\alpha(y)=\big(f(y)\big)_{f\in C_\alpha}$ for each $y\in Y$ and $\alpha<\tau$. Because every $f\in C(Y)$ is right-uniformly continuous, we can apply Lemma 2.3 to conclude that the orbits
$\Gamma(f)=\{\widetilde\Theta(g,f):g\in G\}$, $f\in C(Y)$, are separable subsets of $C(Y)$. Hence, $C_0$ is also a separable subset of $C(Y)$ and $p_0(Y)=Y_0$ is a metric compactum. Let $X_\alpha=p_\alpha(X)$ and $Z_\alpha=p_\alpha(Z)$ for all $\alpha\geq 0$. Because $Z$ is dense in $X$, every $Z_\alpha$ is dense in $X_\alpha$.

\begin{claim}
Every $X_\alpha$ contains a dense \v{C}ech-complete subspace, and hence it is a Baire space.
\end{claim}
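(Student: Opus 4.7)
My plan is to exhibit a specific dense \v{C}ech-complete subspace of $X_\alpha$, namely $E_\alpha := Y_\alpha \setminus p_\alpha(Y\setminus D)$, and then to invoke the classical fact that any Tychonoff space containing a dense \v{C}ech-complete subspace is Baire. Three things need to be verified: $E_\alpha \subseteq X_\alpha$, $E_\alpha$ is dense in $X_\alpha$, and $E_\alpha$ is \v{C}ech-complete.

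The key separation step is the equality $p_\alpha(Z) \cap p_\alpha(Y \setminus D) = \varnothing$. To prove it, fix $z \in Z$ and $y \in Y \setminus D$ and choose $i,j$ with $z \in F_i$ and $y \in Y_j$. Since $f_{ij} \in C_0 \subseteq C_\alpha$ and $f_{ij}(F_i)\cap f_{ij}(Y_j)=\varnothing$, the $f_{ij}$-coordinate of $p_\alpha$ distinguishes $z$ from $y$. This separation immediately yields $Z_\alpha = p_\alpha(Z) \subseteq E_\alpha$, and since $Z$ is dense in $X$, $Z_\alpha$ is dense in $X_\alpha$, which establishes density of $E_\alpha$ in $X_\alpha$.

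For the inclusion $E_\alpha \subseteq X_\alpha$: if $y_\alpha \in E_\alpha$, then no preimage of $y_\alpha$ under $p_\alpha$ lies in $Y \setminus D$, so every such preimage lies in $D \subseteq X$ and hence $y_\alpha \in p_\alpha(X) = X_\alpha$. For \v{C}ech-completeness: each $p_\alpha(Y_i)$ is compact, hence closed in the Hausdorff space $Y_\alpha$, so $p_\alpha(Y\setminus D) = \bigcup_i p_\alpha(Y_i)$ is $F_\sigma$ and $E_\alpha$ is a $G_\delta$ subset of the compact Hausdorff space $Y_\alpha$. Intersecting the defining open sets with $\overline{E_\alpha}^{Y_\alpha}$ exhibits $E_\alpha$ as a $G_\delta$ in one of its Hausdorff compactifications, so $E_\alpha$ is \v{C}ech-complete. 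The only real content is the separation property, and that is precisely what the functions $f_{ij}$ were inserted into $C_0$ to deliver, so no serious obstacle arises.
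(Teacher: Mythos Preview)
Your argument is correct and rests on the same key observation as the paper's: the functions $f_{ij}\in C_0\subseteq C_\alpha$ separate $Z$ from $Y\setminus D$ after projection. The execution differs, however. The paper first forms $L_0=Y_0\setminus\bigcup_i p_0(Y_i)$, pulls it back to $L=p_0^{-1}(L_0)\subseteq Y$, and then sets $L_\alpha=p_\alpha(L)$; \v Cech-completeness of $L_\alpha$ is deduced from preservation under perfect maps, using that $L$ is a full $p_0$-preimage (hence $p_0|L$ is perfect) and a full $p_\alpha$-preimage (hence $p_\alpha|L$ is perfect). Your route is more direct: you work at level $\alpha$ from the outset and read off \v Cech-completeness from the $G_\delta$-in-a-compactum characterization. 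In general your $E_\alpha$ properly contains the paper's $L_\alpha=(p^\alpha_0)^{-1}(L_0)$, but either set suffices for the claim. One small payoff of the paper's detour is that the perfectness of $p_\alpha|L\colon L\to L_\alpha$ is reused later in proving part~(2) of the theorem (where one needs $p_\alpha|X$ perfect); your cleaner argument for the claim does not set up that piece of machinery.
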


Indeed, let $L_0=Y_0\setminus\bigcup_{i=1}^\infty p_0(Y_i)$ and $L=p_0^{-1}(L_0)$. Then $L_0$ is \v{C}ech-complete and we have the inclusions $Z_0\subset L_0\subset X_0$ and $Z\subset L\subset X$. Moreover, the map $p_0|L:L\to L_0$ is perfect, so $L$ is also \v{C}ech-complete. Since $C_0\subset C_\alpha$, $p_\alpha^{-1}(p_\alpha(L))=L$ and the map $p_\alpha|L:L\to L_\alpha=p_\alpha(L)$ is a perfect surjection. So, $L_\alpha$ is a dense \v{C}ech-complete subspace of $X_\alpha$.

\begin{claim}
Each $p_\alpha:Y\to Y_\alpha$ is a skeletal map.
\end{claim}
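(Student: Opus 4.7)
The plan is to induce a $G$-action on $Y_\alpha$, invoke Lemma 2.1 on the dense Baire subspace $X_\alpha$, and then transfer the resulting skeletality statement back to all of $Y_\alpha$.

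First, each $C_\alpha$ is invariant under $\widetilde\Theta$ by construction, since it is a union of orbits $\{\widetilde\Theta(g,f):g\in G\}$. Therefore, if $p_\alpha(y_1)=p_\alpha(y_2)$, then for every $f\in C_\alpha$ and every $g\in G$ one has $f(gy_1)=\widetilde\Theta(g,f)(y_1)=\widetilde\Theta(g,f)(y_2)=f(gy_2)$, so $p_\alpha(gy_1)=p_\alpha(gy_2)$. This gives a well-defined induced action $\widetilde\theta^\alpha\colon G\times Y_\alpha\to Y_\alpha$, $g\cdot p_\alpha(y)=p_\alpha(gy)$, whose continuity follows because $p_\alpha$ is a quotient map (being a continuous closed surjection from compact $Y$ onto Hausdorff $Y_\alpha$). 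Since $X$ is $G$-invariant, $\widetilde\theta^\alpha$ restricts to a continuous transitive action on the dense subspace $X_\alpha=p_\alpha(X)$, and by Claim 1 this $X_\alpha$ is Baire. Lemma 2.1(1) then shows that each orbit map $\theta^{x_\alpha}\colon G\to X_\alpha$ is skeletal.

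Now fix an open $U\subset Y$. Because $X$ is dense in $Y$, pick $x\in U\cap X$ and set $V=(\theta^x)^{-1}(U)$, an open neighborhood of $\rm e$ in $G$. With $x_\alpha=p_\alpha(x)$ one has $\theta^{x_\alpha}(V)=p_\alpha(\theta^x(V))\subset p_\alpha(U)$, so skeletality of $\theta^{x_\alpha}$ supplies a non-empty open set $W\subset X_\alpha$ with $W\subset\overline{\theta^{x_\alpha}(V)}^{X_\alpha}$. Choose an open $W'\subset Y_\alpha$ with $W'\cap X_\alpha=W$; density of $X_\alpha$ in $Y_\alpha$ forces $W'$ to be non-empty, and together with the identity $\overline{A}^{X_\alpha}=\overline{A}^{Y_\alpha}\cap X_\alpha$ it gives $W'\subset\overline{W}^{Y_\alpha}\subset\overline{p_\alpha(U)}^{Y_\alpha}$. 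Hence $W'\subset\Int_{Y_\alpha}\overline{p_\alpha(U)}$, so $p_\alpha$ is skeletal. The main delicate point is this last transfer: Lemma 2.1 yields skeletality inside the Baire subspace $X_\alpha$, while the claim must be verified in the ambient $Y_\alpha$, so one must carefully coordinate closures and interiors between the two spaces.
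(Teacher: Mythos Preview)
Your argument is correct and follows the same overall route as the paper: use the $\widetilde\Theta$-invariance of $C_\alpha$ to induce a $G$-action on $Y_\alpha$, apply Lemma~2.1 to the transitive action on the Baire space $X_\alpha$, and then pass from $X_\alpha$ to $Y_\alpha$ by density. The one substantive difference is that the paper, having already reduced via Lemma~2.4 to the case where $G$ is a genuine topological group, invokes Lemma~2.1(2) rather than 2.1(1) and thereby obtains the stronger intermediate conclusion that $p_\alpha|X\colon X\to X_\alpha$ is \emph{nearly open}; this stronger fact is reused later in the proof of Theorem~1.1(2), where nearly open plus perfect yields open. Your appeal to 2.1(1) suffices for the claim as stated but would not supply that extra mileage. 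A minor point: your continuity argument for the induced action actually needs $\mathrm{id}_G\times p_\alpha$, not just $p_\alpha$, to be a quotient map (this holds because $p_\alpha$ is perfect); the paper sidesteps the issue by verifying only separate continuity of $\widetilde\theta_\alpha$, which is all Lemma~2.1 requires.
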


Since $C_\alpha$ is $\widetilde\Theta$-invariant (i.e., $\widetilde\Theta(G,C_\alpha)=C_\alpha$), there is an action $\widetilde\theta_\alpha:G\times Y_\alpha\to Y_\alpha$,
defined by $\widetilde\theta_\alpha(g',p_\alpha(y))=\big(f(g'y)\big)_{f\in C_\alpha}$,
which makes the diagram below commutative.
{ $$
\begin{CD}
G\times Y@>{{\widetilde\theta}}>>Y\\
@VV{id\times p_\alpha}V@VV{p_\alpha}V\\
G\times Y_\alpha @>{{\widetilde\theta_\alpha}}>>Y_\alpha
\end{CD}
$$}\\
Moreover, the restriction $\theta_\alpha=\widetilde\theta_\alpha|G\times X_\alpha$ is an action on $X_\alpha$ such that $(p_\alpha|X)\circ\theta=\theta_\alpha$.
 Let show that $\theta_\alpha$ is separately continuous. For this, we only need that $\widetilde\theta$ is separately continuous.
 Indeed, fix $g\in G$. Clearly, $(\widetilde\theta_{\alpha})g$ is continuous by commutativity of the diagram and compactness of all spaces involved. Now, fix $y_\alpha\in Y_\alpha$. Pick $z\in Y$ such that $p_\alpha(z) = y_\alpha$. Then $\widetilde\theta_\alpha^{y_\alpha}$ is equal to $p_\alpha\circ \widetilde\theta^z$ and hence is continuous being the composition of two continuous functions.

Note that, since $\theta$ is transitive on $X$, each $\theta_\alpha$ acts transitively on $X_\alpha$. To show that $p_\alpha$ is skeletal, let $U\subset Y$ be open. Since $X$ is dense in $Y$, we can fix $x\in U\cap X$.
Then the maps $\theta^x:G\to X$ and
$\theta_\alpha^y:G\to X_\alpha$ are continuous and $p_\alpha\circ\theta^x=\theta_\alpha^{y}$, where $y=p_\alpha(x)$. So,  we have the commutative diagram
$$
\xymatrix{
G\ar[r]^{\theta^x}\ar[dr]_{\theta_\alpha^y}&X\ar[d]^{p_\alpha}\\
&X_\alpha\\
}
$$
Since $X_\alpha$ is Baire space, according to Lemma 2.1(2), $\theta_\alpha^{y}$ is nearly open, and $\theta_\alpha^{y}((\theta^x)^{-1}(U\cap X))=p_\alpha(U\cap X)$ implies $p_\alpha(U\cap X)\subset \Int_{X_\alpha}\cl_{X_\alpha}p_\alpha(U\cap X)$. Hence every $p_\alpha|X:X\to X_\alpha$ is nearly open. Because $X_\alpha$ is dense in $Y_\alpha$ and
$U\cap X$ is dense in $U$, $\Int_{Y_\alpha}\cl_{Y_\alpha}p_\alpha(U)\neq\varnothing$. Thus, $p_\alpha$ is a skeletal map.

\begin{claim}
Each map $p_\alpha^{\alpha+1}$ has a metrizable kernel.
\end{claim}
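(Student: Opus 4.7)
The plan is to exploit the separability provided by Lemma 2.3. Observe that, by the construction of the chain $\{C_\alpha\}$, the difference $C_{\alpha+1}\setminus C_\alpha$ is contained in the orbit $\Gamma(f_\alpha)=\{\widetilde\Theta(g,f_\alpha):g\in G\}$. Since every $f\in C(Y)$ is right-uniformly continuous (Lemma 2.2) and $G$ is $\omega$-narrow, Lemma 2.3 says that $\Gamma(f_\alpha)$ is a separable subset of the Banach space $C(Y)$. So I pick a countable family $\{h_k:k\in\mathbb N\}\subset\Gamma(f_\alpha)$ that is dense in $\Gamma(f_\alpha)$ in the sup-norm, and let $q\colon Y\to\mathbb R^{\mathbb N}$ be the evaluation map $q(y)=(h_k(y))_{k\in\mathbb N}$. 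Its image $M_\alpha=q(Y)$ is a metrizable compactum (being a continuous image of the compactum $Y$ under a map into a separable metric space); in particular it is separable metric.

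Next I would define the diagonal map $\Phi\colon Y\to Y_\alpha\times M_\alpha$ by $\Phi(y)=\bigl(p_\alpha(y),q(y)\bigr)$. The key reduction is that $\Phi$ factors through $p_{\alpha+1}$, and moreover the induced map $\overline\Phi\colon Y_{\alpha+1}\to Y_\alpha\times M_\alpha$ is injective. To see this, suppose $y,y'\in Y$ satisfy $p_\alpha(y)=p_\alpha(y')$ and $q(y)=q(y')$; I must check $f(y)=f(y')$ for every $f\in C_{\alpha+1}$. If $f\in C_\alpha$ this is immediate from $p_\alpha(y)=p_\alpha(y')$. If $f\in\Gamma(f_\alpha)$, then for any $\varepsilon>0$ there is $k$ with $\|f-h_k\|_\infty<\varepsilon$, so
\[
|f(y)-f(y')|\le |f(y)-h_k(y)|+|h_k(y)-h_k(y')|+|h_k(y')-f(y')|<2\varepsilon,
\]
because $h_k(y)=h_k(y')$ by assumption. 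Letting $\varepsilon\to 0$ gives $f(y)=f(y')$, so $p_{\alpha+1}(y)=p_{\alpha+1}(y')$ and the map $\overline\Phi$ is well-defined and injective. The converse implication $p_{\alpha+1}(y)=p_{\alpha+1}(y')\Rightarrow \Phi(y)=\Phi(y')$ is trivial since $C_\alpha\cup\{h_k:k\in\mathbb N\}\subset C_{\alpha+1}$.

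Since $Y_{\alpha+1}$ is compact and $\overline\Phi$ is a continuous injection into a Hausdorff space, $\overline\Phi$ is a topological embedding. Restricting to $X_{\alpha+1}=p_{\alpha+1}(X)$ gives an embedding of $X_{\alpha+1}$ into $X_\alpha\times M_\alpha$; moreover, because the first coordinate of $\overline\Phi$ is $p_\alpha^{\alpha+1}$, the bonding map $p_\alpha^{\alpha+1}$ coincides with the restriction of the projection $\pi\colon X_\alpha\times M_\alpha\to X_\alpha$. Thus $p_\alpha^{\alpha+1}$ has a metrizable kernel $M_\alpha$, as required.

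The only nontrivial step is the uniform-approximation argument that upgrades density of $\{h_k\}$ in $\Gamma(f_\alpha)$ to genuine separation of the coordinates $f\in\Gamma(f_\alpha)$; but this is precisely where Lemma 2.3 (separability of the orbit in the sup-norm, \emph{not} merely in the pointwise topology) is essential, and it is exactly the point at which right-uniform continuity of the functions in $C(Y)$ — guaranteed by Lemma 2.2 since $Y$ is compact — comes into play.
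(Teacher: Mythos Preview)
Your argument is correct and follows essentially the same idea as the paper: the paper defines $q_\alpha\colon Y\to\mathbb R^{\Gamma(f_\alpha)}$ as the diagonal over the \emph{entire} orbit, notes that $p_{\alpha+1}=p_\alpha\triangle q_\alpha$ (so the factorization is automatic), and invokes separability of $\Gamma(f_\alpha)$ in $C(Y)$ to conclude $q_\alpha(Y)$ is a metric compactum. You instead pass immediately to a countable sup-norm-dense subfamily $\{h_k\}$, which makes metrizability of $M_\alpha$ trivial but relocates the work to the $\varepsilon$-approximation showing that $(p_\alpha,q)$ still separates the fibers of $p_{\alpha+1}$; this is exactly the hidden step in the paper's assertion that $q_\alpha(Y)$ is metrizable, so the two arguments are the same with the approximation placed at different points.
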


Let $\Gamma(f_\alpha)$ be the orbit of $f_\alpha$ and $q_\alpha:Y\to\mathbb R^{\Gamma(f_\alpha)}$ be the projection. Since, by Lemma 2.3, $\Gamma(f_\alpha)$ is a separable subspace of $C(Y)$  and $Y$ is compact, the image $q_\alpha(Y)$ is a metric compactum.  Finally,
because $p_{\alpha+1}$ is the diagonal product $p_\alpha\triangle q_\alpha$, $p_\alpha^{\alpha+1}$ has a metrizable kernel.
Note that the inverse system $S_Y=\{Y_\alpha,p^\beta_\alpha,\alpha<\beta<\tau\}$ is well ordered  such that $Y_0$ is a metric compactum, all maps $p_\alpha$ are skeletal and $Y=\varprojlim S_Y$.
The system $S_Y$ is continuous because $C_\alpha=\bigcup_{\gamma<\alpha}C_\gamma$ for every limit cardinal $\alpha<\tau$. Moreover, it follows from our construction that $S_Y$ is factorizable.
So, $Y$ is a skeletally Dugundji space. Finally, since $X$ is dense in $Y$, the space $X$ is skeletally Dugundji as well (see \cite[Theorem 3.3]{kpv1}).

To prove Theorem 1.1(2), observed that if $X$ is \v{C}ech-complete and $\sigma$-compact, then $L_\alpha=X_\alpha$ for all $\alpha$ and the maps $p_\alpha|X:X\to X_\alpha$ are perfect and nearly open (see the proof of Claim 2). Because every closed nearly open map is open, we finally obtain that all $p_\alpha|X$ are open. Then the inverse system
 $S_X=\{X_\alpha,t^\beta_\alpha,\alpha<\beta<\tau\}$ is continuous such that $t^\beta_\alpha$ are open and perfect maps, where $t^\beta_\alpha=p^\beta_\alpha|X_\beta$. Moreover $t^{\alpha+1}_\alpha$ have metrizable kernels and $X_0$ is a Polish space. Therefore, $X$ is openly Dugundji.
\end{proof}

We will now show that Theorem 1.1 does not hold for spaces on which we do not impose extra conditions. A \emph{$P$-space} is a space in which every $G_\delta$-subset is open.

\begin{lem}
Every skeletally Dugundji $P$-space is discrete.
\end{lem}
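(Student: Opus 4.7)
My plan is to break the argument into two steps: first establish that every skeletally Dugundji space has countable cellularity, and then show that any Tychonoff $P$-space with countable cellularity must be discrete.

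For the cellularity step, I appeal to the characterization cited in the introduction (\cite[Theorem 3.3]{kpv1}): since $X$ is skeletally Dugundji, $\beta X$ is co-absolute with a Dugundji compactum $K$. Dugundji compacta have countable cellularity, and cellularity is preserved by the perfect irreducible maps that link two co-absolute compacta to their common absolute, so $c(\beta X) = c(K) = \omega$. Density of $X$ in $\beta X$ then forces $c(X) = \omega$ as well.

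For the discreteness step, assume for contradiction that some $x \in X$ is not isolated. I build by transfinite recursion on $\alpha < \omega_1$ open neighborhoods $U_\alpha$ of $x$ and non-empty open sets $V_\alpha$, starting with $U_0 = X$. At a successor stage, $x$ is non-isolated in the open set $U_\alpha$ (otherwise $\{x\}$ would be open in $X$), so I choose $y_\alpha \in U_\alpha \setminus \{x\}$; by regularity I find an open $V_\alpha \subset U_\alpha$ with $y_\alpha \in V_\alpha$ and $x \notin \overline{V_\alpha}$, and set $U_{\alpha+1} = U_\alpha \setminus \overline{V_\alpha}$, still an open neighborhood of $x$. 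At a countable limit $\lambda$, put $U_\lambda = \bigcap_{\alpha<\lambda} U_\alpha$; this is open because $X$ is a $P$-space, and $V_\beta \cap U_\lambda = \varnothing$ for every $\beta < \lambda$ by construction. Since $V_\alpha \subset U_\alpha \subset U_{\beta+1}$ is disjoint from $V_\beta$ whenever $\beta < \alpha$, the family $\{V_\alpha : \alpha < \omega_1\}$ is cellular of size $\omega_1$, contradicting $c(X) = \omega$.

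The main obstacle, and the only place the $P$-space hypothesis enters, is at the countable limit stages: the recursion passes through them precisely because a countable intersection of open sets is open in a $P$-space. This keeps the construction alive through every countable ordinal, which is exactly what is needed to produce the $\omega_1$-sized cellular family and thereby violate countable cellularity.
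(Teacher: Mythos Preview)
Your proof is correct and rests on the same two ideas as the paper's: (i) skeletally Dugundji spaces are ccc, via co-absoluteness of $\beta X$ with a Dugundji (hence dyadic, hence ccc) compactum, and (ii) a non-isolated point in a $P$-space allows a transfinite recursion of length $\omega_1$ producing an uncountable cellular family, with the $P$-property used precisely at the countable limit stages.

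The organization differs slightly. You cleanly separate the argument into two self-contained lemmas and carry out the recursion inside $X$ itself. The paper instead passes to $\beta X$, notes that $x$ remains a non-isolated $P$-point there, pulls back to the common absolute $Y$ (so $A=f^{-1}(\{x\})$ is a nowhere dense closed $P$-set in $Y$), and performs the recursion in $Y$, concluding that $Y$ and hence the Dugundji space $Z$ fail ccc. Your version is a bit more direct, since it avoids checking that the $P$-point property transfers to $\beta X$ and that preimages under the irreducible map give $P$-sets; the paper's version has the minor advantage that the ccc contradiction is reached in a compact space where the irreducible-map machinery is most natural. Substantively the arguments are the same.
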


\begin{proof}
Let $X$ be a skeletally Dugundji $P$-space, and assume that $x$ is a non-isolated point of $X$. Consider $\beta X$. It consequently contains the non-isolated $P$-point $x$. Assume that $\beta X$ is co-absolute with a Dugundji space $Z$. Then there is a compact space $Y$ which admits irreducible maps $f\from Y\to \beta X$ and $g\from Y\to Z$ (irreducible maps are surjective by definition). The set $A=f^{-1}(\{x\})$ is a nowhere dense closed $P$-set in $Y$ since $f$ is irreducible. We claim that $Y$ does not satisfy the countable chain condition. Indeed, by recursion on $\alpha < \omega_1$, we will construct a nonempty open subset $U_\alpha$ of $Y$ such that $\CL{U}_\alpha \cap (A \cup \bigcup_{\beta < \alpha} \CL{U}_\beta)$. Suppose that we defined $U_\beta$ for every $\beta < \alpha < \omega_1$. Then $V=Y\setminus \bigcup_{\beta<\alpha} \CL{U}_\beta$ is a neighborhood of $A$. Hence since $A$ is nowhere dense, there is a nonempty open subset $U_\alpha$ of $X$ such that $\CL{U}_\alpha \con V\setminus A$. Then, clearly, $U_\alpha$ is as required. It now suffices to observe that the collection $\{U_\alpha : \alpha < \omega_1\}$ witnesses the fact that $Y$ does not satisfy the countable chain condition.
But this means that $Z$ does not satisfy the countable chain condition since $g$ is irreducible. This is a contradiction since $Z$ is Dugundji and hence dyadic \cite[10.1.3]{ArTk}.
\end{proof}

\begin{example}
Let $X$ be the one-point Lindel\"offication of a discrete space of size $\omega_1$. Then $X$ is a $P$-space, and hence so is its free topological group $F(X)$ \cite[7.4.7]{ArTk}. Moreover, finite products of $X$ are Lindel\"of, hence $F(X)$ is Lindel\"of \cite[7.1.18]{ArTk}. This means that $F(X)$ is a Lindel\"of $P$-group and hence, in particular, is $\omega$-narrow. Since $X$ is not discrete, $F(X)$ has no isolated points. By Teleman~\cite{tel}, $F(X)$ has an $F(X)$-compactification. But $F(X)$ is not skeletally Dugundji by the lemma just proved.
\end{example}
The following questions remain open.

\begin{question}
Let the ($k$-separable) Baire space $X$ admit a continuous transitive action
of an $\omega$-narrow semitopological group $G$ such that $X$ has  a $G$-compactification. Is $X$ skeletally Dugundji?
\end{question}

\begin{proof}[Proof of Theorem $1.3$]
Suppose $X$ is a pseudocompact space and $\theta:G\times X\to X$ is a continuous transitive action,
where $G$ is an  $\omega$-narrow semitopological group which is a $k$-space. Then,  there is a continuous action
$\widetilde\theta:G\times\beta X\to\beta X$ extending $\theta$. Indeed, since each $\theta_g:X\to X$ is a homeomorphism, $\theta_g$ can be extended to a homeomorphism $\widetilde\theta_g:\beta X\to\beta X$. Hence, we may define  $\widetilde\theta:G\times\beta X\to\beta X$ by
$\widetilde\theta(g,x)=\widetilde\theta_g(x)$. Obviously, $\widetilde\theta(g_1,\widetilde\theta(g_2,x))=\widetilde\theta(g_1g_2,x)$.
So, it remains to show that $\widetilde\theta$ is continuous. To this end, observe that $G\times\beta X$ is a $k$-space as a product of the $k$-space $G$ and the compactum $\beta X$ (see \cite[Theorem 3.3.27]{en}). Therefore, it is enough to show that each restriction $\widetilde\theta_K=\widetilde\theta|(K\times\beta X)$ is continuous, where $K\subset G$ is compact. And this is true because by \cite[Theorem 3.10.26]{en} the product $K\times X$ is pseudocompact. Hence, by Glicksberg's theorem \cite[3.12.20(c)]{en}, $\beta(K\times X)=K\times\beta X$. Thus,
$\theta|(K\times X)$ has a continuous extension to $K\times\beta X$ and it is obvious that this extension coincides with $\widetilde\theta_K$.

Now we can complete the proof of Theorem 1.3.
We apply Lemma 2.4 to the action $\widetilde\theta$ and obtain that $\varphi:G\to\mathcal H(\beta X)$ is a continuous homomorphism. So, $\varphi(G)$
is a an $\omega$-narrow topological group acting continuously on $\beta X$. Denote this action by $\widetilde\theta_\varphi:\varphi(G)\times\beta X\to\beta X$. Obviously $\widetilde\theta_\varphi$,  restricted on $\varphi(G)\times X$, provides a continuous and transitive action $\theta_\varphi:\varphi(G)\times X\to X$ on $X$. Therefore, $X$ is a pseudocompact space admitting a continuous and transitive action of an $\omega$-narrow topological group.
Hence, we can apply \cite[Theorem 2]{u1} to conclude that $\beta X$ is a Dugundji space.
\end{proof}

\section{Separately continuous actions}
First we prove the following lemma:
\begin{lem}
Let $F\subset C_p(X)$ be a separable subset and $p:X\to\mathbb R^F$ be the diagonal product of all $h\in F$. Then $p(X)$ is a sub-metrizable space.
\end{lem}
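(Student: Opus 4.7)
The plan is to exhibit a coarser metrizable topology on $p(X)$, which is precisely the meaning of sub-metrizability. The natural candidate for this coarser topology is the one induced by a countable dense subfamily of $F$, and the argument will split into producing such a subfamily, projecting to the corresponding countable product, and verifying that the projection restricted to $p(X)$ is injective.

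First I would pick a countable dense subset $D \subset F$ in the topology of pointwise convergence inherited from $C_p(X)$, which exists by the assumed separability. Let $q \from X \to \mathbb{R}^D$ be the diagonal product of the functions in $D$. Since $D$ is countable, $\mathbb{R}^D$ is metrizable. Writing $\pi \from \mathbb{R}^F \to \mathbb{R}^D$ for the natural projection onto the $D$-coordinates, one clearly has $\pi \circ p = q$, so $\pi$ maps $p(X)$ continuously onto the subspace $q(X)$ of the metrizable space $\mathbb{R}^D$.

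The crux is to show that $\pi|p(X) \from p(X) \to q(X)$ is injective; once this is established, $\pi|p(X)$ is a continuous bijection onto a metrizable space, so the pull-back of the metric topology gives a coarser metrizable topology on $p(X)$, as required. For injectivity, suppose $p(x_1), p(x_2) \in p(X)$ satisfy $\pi(p(x_1)) = \pi(p(x_2))$, equivalently $h(x_1) = h(x_2)$ for every $h \in D$. For each fixed pair $x_1, x_2 \in X$, the evaluation map $h \mapsto h(x_1) - h(x_2)$ is continuous on $C_p(X)$, so its zero set $E = \{h \in C(X) : h(x_1) = h(x_2)\}$ is closed in $C_p(X)$. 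Since $D \subset E$ and $D$ is dense in $F$, we get $F \subset E$, i.e., $h(x_1) = h(x_2)$ for every $h \in F$. Therefore $p(x_1) = p(x_2)$, proving injectivity.

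I do not anticipate a serious obstacle here; the only subtlety worth flagging is that $C_p(X)$ need not be metrizable, so one must argue density via the closedness of $E$ rather than a sequential argument. Once injectivity is in hand, the conclusion that $p(X)$ is sub-metrizable is immediate from the existence of the continuous bijection $\pi|p(X) \from p(X) \to q(X) \subset \mathbb{R}^D$.
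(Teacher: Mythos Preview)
Your proof is correct and follows essentially the same route as the paper: choose a countable dense subset of $F$, project onto the corresponding countable product, and verify that this projection is one-to-one on $p(X)$. The only cosmetic difference is in the injectivity step: the paper argues directly with an $\eta/2$ estimate from a basic $C_p$-neighborhood, whereas you phrase the same point more abstractly via closedness of the set $\{h:h(x_1)=h(x_2)\}$ in $C_p(X)$; the content is identical.
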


\begin{proof}
 Take a countable dense subset $\Gamma$ of $F$ and let $\varphi:X\to\mathbb R^\Gamma$ be the diagonal product of all $h\in\Gamma$. Then $\varphi (X)$ is a metrizable space. There is a continuous surjection $\lambda:p(X)\to\varphi(X)$ assigning to each point
$p(x)=(h(x)_{h\in F})\in p(X)$ the point $\varphi(x)=(h(x)_{h\in\Gamma})\in\varphi(X)$. We claim that $\lambda$ is one-to-one. Suppose $\lambda(p(x_1))=\lambda(p(x_2))$ for the distinct points $p(x_1), p(x_2)\in p(X)$. So, there is $h\in F$ with $h(x_1)\neq h(x_2)$, and let $|h(x_1)-h(x_2)|=\eta$. Since $\Gamma$ is dense in $F$ with respect to the pointwise topology, there exists $f\in\Gamma$ such that $|f(x_i)-h(x_i)|<\eta/2$ for $i=1,2$. Hence, $f(x_1)\neq f(x_2)$, which contradicts the equality $\varphi(x_1)=\varphi(x_2)$.
\end{proof}

{\em Proof of Theorem $1.4$.}
We follow the proof of Theorem 1.1. Let $X$ contain a dense \v{C}ech-complete $k$-separable space $D$ and $\theta:G\times X\to X$ be 
a transitive separately continuous action of an $\omega$-narrow semitopological group such that $\theta$ can be extended to a separately continuous $s$-action
$\widetilde\theta:G\times Y\to Y$, where $Y$ is a compactification of $X$. Suppose also that $Z$ is a dense $\sigma$-compact set of $D$. The action $\widetilde\theta$ generates a right separately continuous action $\widetilde\Theta:G\times C_p(Y)\to C_p(Y)$ such that each orbit
$\Gamma(f)=\{\widetilde\Theta(g,f):g\in G\}$, $f\in C(Y)$, is a separable subset of $C_p(Y)$. As in the proof of Theorem 1.1, we embed $Y$ in $\mathbb R^{C(Y)}$, define the sets $C_0$ and $C_\alpha$ and consider the projections $p_\alpha:Y\to\mathbb R^{C_\alpha}$, $\alpha\geq 0$. According to Claim 1, each
$X_\alpha$ is a Baire space.
Because $\widetilde\theta$ is separately continuous and $\theta$ is transitive, the actions $\theta_\alpha:G\times X_\alpha\to X_\alpha$ are also separately continuous and transitive, see Claim 2. The proof that $p_\alpha$ are skeletal maps is the same. The only difference is, according to Lemma 2.1(1), that the map $\theta_\alpha^y:G\to X_\alpha$ is skeletal for every $y\in X_\alpha$. So, for every open $U\subset Y$ and points $x\in U\cap X$ and $y=p_\alpha(y)$, we have
$\rm{Int}_{X_\alpha}\rm{cl}_{X_\alpha}\theta_\alpha^y((\theta^x)^{-1}(U\cap X))=\rm{Int}_{X_\alpha}\rm{cl}_{X_\alpha}p_\alpha(U\cap X)\neq\varnothing$. This implies that $p_\alpha$ is skeletal, see the proof of Claim 2. Finally, in the proof of Claim 3 we can use Lemma 3.1  to conclude that every $p_\alpha^{\alpha+1}$ has a metrizable kernel. $\Box$

\begin{proof}[Proof of Theorem 1.5]
Suppose $X$ is a pseudocompact space, $G$ is an $\omega$-narrow semitopological group and
$\theta:G\times X\to X$ is a separately continuous transitive $s$-action. Then
$\theta$ generates a separately continuous action $\Theta:G\times C_p(X)\to C_p(X)$ such that each orbit $\Gamma(f)=\{\Theta(g,f):g\in G\}$, $f\in C(Y)$, is a separable subset of $C_p(Y)$.
 We follow the proof of Theorem 1.1, considering $X$ instead of $Y$. We embed $X$ in $\mathbb R^{C(X)}$, where $C(X)=\{f_\gamma:\gamma<\tau\}$, and let $C_\alpha=C_0\cup\{\Theta(g,f_\gamma):g\in G, \gamma<\alpha\}$ with $C_0$ being the set of all constant functions on $X$. We also consider the projections
 $p_\alpha:X\to X_\alpha\subset\mathbb R^{C_\alpha}$, and
the transitive actions $\theta_\alpha:G\times X_\alpha\to X_\alpha$, $0\leq\alpha<\tau$. Observe that $X_0$ is a point because all $f\in C_0$ are constant functions.
Since $\theta$ is separately continuous, so is each $\theta_\alpha$ (see the proof of Claim 2). Because every $X_\alpha$ is a Baire space (being pseudocompact), following the proof of Claim 2 and using Lemma 2.1(1), we show that
all maps $p_\alpha$ are skeletal.

It remains to show that the
maps $p^{\alpha+1}_\alpha$ have metrizable kernels.
Let $\Gamma(f_\alpha)=\{\Theta(g,f_\alpha):g\in G\}$ be the orbit of $f_\alpha$ and $q_\alpha:X\to\mathbb R^{\Gamma(f_\alpha)}$ be the projection. Since $\Gamma(f_\alpha)$ is a separable subset of $C_p(X)$, by Lemma 3.1,
$q_\alpha(X)$ is sub-metrizable. On the other hand, $q_\alpha(X)$ is pseudocompact. So, by \cite{vel} $q_\alpha(X)$ is a metric compactum.
Therefore, the inverse system $S=\{X_\alpha,p^\beta_\alpha,\alpha<\beta<\tau\}$ is well ordered, almost continuous, consists of skeletal maps with $X_0$ being a point and each $p_\alpha^{\alpha+1}$ having a metrizable kernel. Moreover, $S$ is factorizable and
$X=\mathrm{a}-\varprojlim S$. Hence, $X$ is a skeletally Dugundji space.
\end{proof}

\begin{proof}[Proof of Proposition 1.6]
Suppose $\theta:G\times X\to X$ is a separately continuous transitive action on a Baire space $X$, where $G$ is an $\omega$-narrow topological group. Then, by Lemma 2.1(2), each $\theta^x:G\to X$ is nearly open. Consequently, by \cite[Proposition 2]{chko}, $\theta$ is continuous. If $X$ is \v{C}ech-complete and $\sigma$-compact, Corollary 1.2 implies that $X$ is openly Dugundji.
 In case $X$ is pseudocompact, we apply \cite[Theorem 2]{u1} to conclude that $\beta X$ is Dugundji.
 \end{proof}

\bibliographystyle{amsplain}

\end{document}